\documentclass[a4paper, 12pt]{amsart}
\usepackage[T2A]{fontenc}
\usepackage[utf8]{inputenc}
\usepackage[english, russian]{babel}
\usepackage{amsmath,amssymb,amsthm,url}
\usepackage{dsfont,textcomp,graphicx,overpic,wrapfig,pdflscape}
\usepackage{textcase}

\graphicspath{{figs/}}

\usepackage[usenames,dvipsnames]{color}

\makeatletter

\@addtoreset{figure}{section}
\makeatother

\usepackage{hyperref}
\hypersetup{
   colorlinks   = true, 
   urlcolor     = blue, 
   linkcolor    = blue, 
   citecolor   = red 
}

\def\R{{\mathbb R}} \def\Z{{\mathbb Z}}

\theoremstyle{plain}
\newtheorem{theorem}{Теорема}[section]
    \newtheorem{lemma}[theorem]{Лемма}
    
    \newtheorem{proposition}[theorem]{Утверждение}

\newtheorem{example}[theorem]{Пример}

\newtheoremstyle{mydefinition}
  {\medskipamount}
  {\medskipamount}
  {\normalfont}
  {\parindent}
  {\bfseries}
  {.}
  { }
  {}

\theoremstyle{mydefinition}

\newtheorem{remark}[theorem]{Замечание}

\begin{document}

\title{Число оборотов замкнутой ломаной вокруг точки}
 
\author{Э. Алкин, А. Мирошников, А. Скопенков}

\thanks{Заметка основана на материалах Летних конференций Турнира городов (\url{https://turgor.ru/lktg/index.php}) и курса <<Введение в топологию (дискретные структуры и алгоритмы в топологии)>> в Московском физико-техническом институте (проводимого вместе с И. Жильцовым и А. Руховичем, \url{https://old.mccme.ru//circles//oim/home/combtop13.htm\#fivt}). 
Благодарим  Е. Бордачеву (Дженжер) и О. Никитенко за совместную работу над теми материалами.} 

\address{\emph{Все авторы:} Московский физико-технический институт. 
\newline
\emph{А. Скопенков:} Независимый московский университет, \url{https://users.mccme.ru/skopenko/}.}

\date{}

\maketitle

\tableofcontents

\section{Введение}\label{s:intr}


\emph{Числом оборотов} замкнутой ориентированной ломаной $l$ на плоскости вокруг не лежащей на ней точки $O$ называется количество оборотов при вращении вектора, начало которого находится в точке $O$, а конец обходит ломаную $l$ в заданном направлении.
В этой заметке мы излагаем прямое элементарное строгое определение (\S\ref{s:wind}) и простейшие свойства (\S\ref{s:prop}, \S\ref{s:wint}) числа оборотов.   
Это определение проще, чем приводимое в некоторых учебниках. 
Например, в \cite{Wn} и в замечательной книге \cite{Ro15} приводятся определения числа оборотов через более сложные понятия поднятия / интеграла. 
При этом для обоснования существования поднятия / интеграла необходимо приведенное здесь элементарное определение. 

Мы покажем, как просто вычислять число оборотов: используя аддитивность (утверждение~\ref{k23}) или подсчет (знаков) точек пересечения (утверждения~\ref{p:stokes}.a и \ref{p:sign-stokes}.a; это дискретные версии важной \emph{теоремы Стокса} из математического анализа). 


На языке числа оборотов мы приведем элементарную формулировку маломерного случая  теоремы Борсука-Улама (теорема~\ref{borul-pl}). 
Эта теорема знаменита своими применениями в топологии, комбинаторике и математической экономике \cite{Ma03}. 

Число оборотов обобщается до \emph{степени отображения}, одного из основных инвариантов математики \cite{Ro15, Sk20}. 
О применениях числа оборотов см., например, \cite{Ro15, AS, AMS}.  
Подробнее о числе оборотов и связанных с ним понятиях см. \cite{Wn, Va81, To84, Ta88},  \cite[теорема 2]{KK18}, \cite[\S2]{Sk18}.  

Изучение большей части заметки не требует предварительных знаний (за пределами школьной программы), но требует 
математической грамотности. 

\section{Число оборотов: определение и обсуждение}\label{s:wind}

Далее все рассматриваемые точки, замкнутые ломаные и ломаные расположены на плоскости.
 
Пусть $O,A,B,A_1,\ldots,A_m$ "--- точки.  
 
Предположим, что $A\ne O$ и $B\ne O$ (но, возможно, $A=B$). 
\emph{Ориентированным (или направленным) углом} $\angle AOB$ называется число  $t\in(-\pi,\pi]$, такое что вектор $\overrightarrow{OB}$ сонаправлен вектору, полученному из $\overrightarrow{OA}$ вращением на угол $t$ против часовой стрелки. 
(Если Вы можете рассматривать векторы на плоскости как комплексные числа, то можете переписать это условие как $\overrightarrow{OB}\upuparrows e^{it}\overrightarrow{OA}$.)

Будем использовать без доказательства следующее утверждение (близкое к аксиомам): 

\begin{proposition}\label{p:angles} Для любых точек $A,B,C$ на плоскости и точки $O$, не лежащей на объединении отрезков $AB,BC,CA$,  

$\bullet$ $\angle OAB + \angle OBC + \angle OCA = \pm 2\pi$, если $O$ лежит в выпуклой оболочке точек $A,B,C$,

$\bullet$ $\angle OAB + \angle OBC + \angle OCA = 0$, иначе. 
\end{proposition} 

Заметим, что если в определении ориентированного угла взять $t\in[0,2\pi)$, то  аналогичное утверждение будет неверным. 

\textbf{Замкнутой ломаной} называется упорядоченный набор точек (не обязательно различных).\footnote{\label{fn:sets} Таким образом, замкнутая ломаная (определенная здесь) не является подмножеством плоскости. 
Тем не менее, иногда мы работаем с замкнутой ломаной $A_1\ldots A_m$ как с объединением отрезков $A_iA_{i+1}$, например, мы пишем <<ломаная, не проходящая через точку>>.}
В этом тексте мы иногда обозначаем упорядоченный набор $(A_1, \ldots, A_n)$ через $A_1\ldots A_n$ (допускается $n=1$).

Строго говоря, {\bf числом оборотов} замкнутой ломаной $l=A_1\ldots A_m$ вокруг не лежащей на ней точки $O$ называется 
$$w(l) = w(l,O) := \frac{\angle A_1OA_2+\angle A_2OA_3+\ldots+\angle A_{m-1}OA_m+\angle A_mOA_1}{2\pi}.$$

\begin{figure}[ht] \centering
    \includegraphics[scale=0.65]{abco.eps}
    \qquad
    \includegraphics[scale=0.85]{all_angles.eps}
    \caption{$w(ABC) = +1 $ и $w(ABCD)=  0$}
    \label{f:abco}
\end{figure}

Например, на рисунке~\ref{f:abco}
$w(ABC) = \dfrac{1}{2\pi} \left( \angle AOB + \angle BOC + \angle COA \right) = +1\quad\text{и}$
$$2\pi\cdot w(ABCD) = 
\angle AOB + \angle BOC + \angle COD + \angle DOA =  
\angle BOD + \angle DOB 
= 0.$$

\begin{figure}[ht] \centering
    \includegraphics[scale=1.0]{winding-examples.eps}
    \caption{Числа оборотов равны $0,~+1,~-1,~+2$}
    \label{f:wn_examples}
\end{figure}

\begin{proposition}\label{p:winsim} Число оборотов 

(a) любого контура выпуклого многоугольника; 
    
(b) любой замкнутой ломаной без самопересечений

вокруг любой точки из внешности (внутренности) равно $0$ ($\pm 1$).
    См. рисунок~\ref{f:wn_examples}.
\end{proposition}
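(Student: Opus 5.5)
Note first that Утверждение~\ref{p:angles} is stated with angles $\angle OAB$ etc., but I read it as the claim about $\angle AOB+\angle BOC+\angle COA$, i.e.\ $2\pi\, w(ABC,O)$ for a triangle. Under that reading it is exactly the triangle case of both parts: $w(ABC)=\pm1$ if $O$ is inside triangle $ABC$ and $w(ABC)=0$ otherwise.

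\emph{Part (a).} Let $A_1\ldots A_m$ be the vertices of a convex polygon in cyclic order, and cut it into triangles from $A_1$. We may assume $O$ does not lie on any diagonal $A_1A_k$; otherwise we triangulate from another vertex, or note that $O$ is then interior and handle it by a limiting argument. The triangle-splitting identity is
$$\angle A_1OA_k+\angle A_kOA_{k+1}+\angle A_{k+1}OA_1=2\pi\, w(A_1A_kA_{k+1}).$$
Summing it over $k=2,\ldots,m-1$, the terms $\angle A_kOA_1$ and $\angle A_1OA_k$ cancel because $\angle XOY=-\angle YOX$ whenever the angle is not $\pi$, and it is not $\pi$ since $O\notin A_1A_k$. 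This gives
$$w(A_1\ldots A_m)=\sum_{k=2}^{m-1} w(A_1A_kA_{k+1}).$$
If $O$ is exterior, it lies in no triangle, so the sum is $0$. If $O$ is interior, it lies in exactly one triangle, which gives $\pm1$. All the triangles have the same orientation, but we do not even need this, since only one of them contributes.

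\emph{Part (b).} I would use the same additivity for general polygons. The key step is to triangulate a simple polygon by diagonals lying inside it, which exists because every simple polygon with $m\ge4$ vertices has an interior diagonal. Induct on $m$. An interior diagonal $A_iA_j$ splits $l$ into two simple closed polylines $l_1$ and $l_2$ with fewer vertices, chosen so that $O$ is not on the diagonal (otherwise pick another). The diagonal cancels in the angle sum, so $w(l)=w(l_1)+w(l_2)$. The interiors of $l_1$ and $l_2$ are disjoint and their union, together with the diagonal, is the interior of $l$. Hence if $O$ is exterior to $l$, it is exterior to both pieces and $w(l)=0$ by induction. If $O$ is interior to $l$, it is interior to exactly one piece and exterior to the other, so $w(l)=\pm1$.

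\emph{Main obstacle.} The difficult part is not the angle bookkeeping but the planar topology hidden in (b). One needs the existence of an interior diagonal (the standard argument takes the leftmost vertex and either its neighbors' segment or the deepest vertex inside that triangle). One also needs the very notion of \emph{interior} and \emph{exterior} of a simple closed polyline, i.e.\ a polygonal Jordan theorem, and the fact that the diagonal splits the interior correctly. I would take these as known, as the paper's level suggests, or define the interior via the triangulation itself. Points $O$ lying on a diagonal are handled by choosing another diagonal, or by noting that $w$ is locally constant in $O$.
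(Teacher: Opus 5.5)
Your argument is correct. For (a) it takes a different route from the paper, and for (b) the paper gives no proof to compare against.

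\textbf{Part (a).} The paper does not triangulate. For exterior $O$ it cuts the contour at the points $A,B$ where the two support lines through $O$ touch it. For interior $O$ it cuts at the points $A',B',C'$ where three rays from $O$ at mutual angles $2\pi/3$ meet the contour. Each arc then lies inside an angle with vertex $O$ smaller than $\pi$, so its $w'$ equals that angle. The sums are $\angle AOB+\angle BOA=0$ and $3\angle A'OB'=\pm2\pi$. This uses convexity directly and has no exceptional positions of $O$.

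Your fan triangulation instead reduces everything to the triangle case, Proposition~\ref{p:angles} (which you rightly read as a statement about $\angle AOB+\angle BOC+\angle COA$). The only other ingredient is the cancellation $\angle XOY=-\angle YOX$ along diagonals. The gain is that the same bookkeeping carries over to (b).

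The price is the case of $O$ on a diagonal, where your first remedy does not always work. The centre of a square lies on both diagonals, and the centre of a regular hexagon lies on a diagonal from every vertex. In (b), a non-convex quadrilateral has only one interior diagonal. So you really need your second remedy, and it deserves one explicit line. Each $\angle A_iOA_{i+1}$ is continuous in $O$ off the segment $A_iA_{i+1}$. By Proposition~\ref{p:noncl}, $w(l,O)$ is then locally constant in $O$ (this is Proposition~\ref{p:rel}), so $O$ can be moved off the diagonals within the interior. In (a) you could instead fan from an auxiliary interior point $P$ such that no vertex lies on the line $OP$, as in Proposition~\ref{p:recur}.

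\textbf{Part (b).} The paper only remarks that (b) is a corollary of the Jordan theorem or a lemma in its proof, so your induction over interior diagonals is an addition. It is fine as a sketch conditional on the facts you list. Note, though, that your splitting statement is a theta-curve theorem of the same depth as the Jordan theorem. That statement says the interiors of $l_1$ and $l_2$ are disjoint and, together with the open diagonal, fill the interior of $l$.

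If you grant the polygonal Jordan theorem anyway, you can avoid triangulations:
\begin{itemize}
\item By Proposition~\ref{p:rel}, $w(l,\cdot)$ is constant on each of the two components of $\R^2-l$, and it is $0$ on the unbounded one.
\item A short segment crossing one edge of $l$ once changes $w$ by $\pm1$ (the argument of Proposition~\ref{p:stokes}.a, or Proposition~\ref{p:sign-stokes}.a). Hence its endpoints lie in different components.
\item Therefore $w=\pm1$ on the bounded component.
\end{itemize}
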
 

\begin{proof}[Доказательство пункта (a)] 
    Обозначим через $\Omega$ данный выпуклый многоугольник, а через $\partial\Omega$ "--- его контур. 
    
    Если точка $O$ лежит вне $\Omega$, то проведeм через $O$ две опорные прямые к $\Omega$. 
    Возьмeм две точки $A, B$ из пересечений этих опорных прямых с контуром $\partial\Omega$ многоугольника. 
    Имеем $w(\partial \Omega) = \frac{1}{2\pi} (\angle AOB+\angle BOA) = 0$. 
    
    Если точка $O$ лежит внутри $\Omega$, то нарисуем правильный треугольник $ABC$ с центром в $O$.
    Возьмeм три точки $A', B', C'$ пересечения  лучей $OA, OB, OC$ с $\partial\Omega$.
    Они разбивают контур $\partial\Omega$ на три ломаные.
    Имеем
    $$w(\partial \Omega) = \frac{1}{2\pi} (\angle A'OB' + \angle B'OC' + \angle C'OA') = \frac{3}{2\pi} \angle A'OB' = \frac{3}{2\pi} \angle AOB = \pm 1.$$
\end{proof}

Пункт (b) в зависимости от изложения является либо следствием \emph{теоремы Жордана} \cite[\S1.4]{Sk20}, либо леммой в ее доказательстве.

Пусть $ABC$ "--- треугольник и  $O$ "--- точка внутри него.
Тогда $w(ABCABC) = 2 w(ABC) = \pm 2$.
Этот пример показывает, что числа оборотов для разных ломаных с одинаковым объединением их отрезков могут быть разными.
Более того, существует замкнутая ломаная $l$, такая что $w(l)=0$ для любой точки $O\in\R^2-l$.
Примерами являются ломаные $A$, $AB$, $ABCB$ для любых точек $A$, $B$ и $C$.

\begin{example}\label{p:ex} Для любых целого числа $n$ и точки $O$ существует замкнутая ломаная, число оборотов которой вокруг $O$ равно $n$.
\end{example}

\begin{proof}[Построение]
    Если $n=0$, то примером является замкнутая ломаная, состоящая из одной точки.
    Если $n\ne 0$, то возьмем правильный треугольник $ABC$ с центром $O$, ориентированный против часовой стрелки при $n>0$ и по часовой стрелке иначе.
    Определим замкнутую ломаную $l$ формулой $l := \underbrace{ABC\ldots ABC}_{|n| \text{ раз}}$.
    По утверждению~\ref{p:winsim}.a, имеем $w(l) = |n| \cdot w(ABC) = n$.
\end{proof}

\begin{proposition}\label{p:noncl} 
    Число оборотов $w(A_1\ldots A_m)$ является целым числом. 
\end{proposition}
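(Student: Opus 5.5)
Докажем утверждение индукцией по $m$, опираясь на утверждение~\ref{p:angles}. Введём обозначение $S(A_1\ldots A_m):=\angle A_1OA_2+\ldots+\angle A_{m-1}OA_m+\angle A_mOA_1$, так что $w=S/(2\pi)$. Достаточно показать, что $S$ кратно $2\pi$.

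\emph{База.} При $m=1$ имеем $S(A_1)=\angle A_1OA_1=0$. При $m=2$ имеем $S(A_1A_2)=\angle A_1OA_2+\angle A_2OA_1$. Если $t:=\angle A_1OA_2\in(-\pi,\pi)$, то $\angle A_2OA_1=-t$, и сумма равна $0$. Если $t=\pi$, то $\angle A_2OA_1=\pi$, и сумма равна $2\pi$. В обоих случаях результат кратен $2\pi$.

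\emph{Шаг.} Пусть $m\ge 3$. Выделим из ломаной $A_1\ldots A_m$ треугольник $A_1A_{m-1}A_m$. Тогда
$$S(A_1\ldots A_m)=S(A_1\ldots A_{m-1})+\bigl(\angle A_{m-1}OA_m+\angle A_mOA_1+\angle A_1OA_{m-1}\bigr),$$
поскольку в $S(A_1\ldots A_{m-1})$ входит слагаемое $\angle A_{m-1}OA_1$, а $\angle A_{m-1}OA_1+\angle A_1OA_{m-1}\in\{0,2\pi\}$ по разобранному случаю $m=2$. Точнее, получаем
$$S(A_1\ldots A_m)=S(A_1\ldots A_{m-1})+T-\bigl(\angle A_{m-1}OA_1+\angle A_1OA_{m-1}\bigr),$$
где $T$ "--- сумма трёх ориентированных углов треугольника $A_{m-1}A_mA_1$ с вершиной $O$. Здесь $S(A_1\ldots A_{m-1})$ кратно $2\pi$ по предположению индукции, а вычитаемая скобка кратна $2\pi$ по базе. Остаётся показать, что $T\in 2\pi\Z$.

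Это главное препятствие. Утверждение~\ref{p:angles} даёт $T\in\{0,\pm2\pi\}$, но лишь при условии, что $O$ не лежит на отрезке $A_1A_{m-1}$. Этот отрезок не входит в исходную ломаную, поэтому условие может нарушаться. Сначала попробую выбрать для разрезания другую диагональ: из $A_1$ провести диагональ к вершине $A_k$, для которой $O\notin A_1A_k$. Если же $O$ лежит на всех таких отрезках, то все точки $A_i$ лежат на одной прямой, проходящей через $O$.

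Надёжнее сразу разобрать вырожденный случай прямым вычислением, не опираясь на утверждение~\ref{p:angles}. Если $O$ лежит на отрезке $A_1A_{m-1}$ (и не совпадает с его концами), то векторы $\overrightarrow{OA_1}$ и $\overrightarrow{OA_{m-1}}$ противонаправлены. Пусть $\alpha:=\angle A_{m-1}OA_m$ и $\beta:=\angle A_mOA_1$. Повороты на $\alpha$ и затем на $\beta$ в сумме переводят $\overrightarrow{OA_{m-1}}$ в вектор, сонаправленный с $\overrightarrow{OA_1}$, то есть $\alpha+\beta\equiv\pi\pmod{2\pi}$. Кроме того, $\angle A_1OA_{m-1}=\pi$. Значит, $T=\alpha+\beta+\pi\equiv0\pmod{2\pi}$.

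Вообще, проще всего провести всё доказательство через это наблюдение. По определению ориентированного угла вектор $\overrightarrow{OA_{i+1}}$ сонаправлен с вектором, полученным из $\overrightarrow{OA_i}$ поворотом на $\angle A_iOA_{i+1}$. Композиция всех поворотов вдоль замкнутой ломаной переводит $\overrightarrow{OA_1}$ в вектор, сонаправленный с ним самим. Поворот на угол $S$ оставляет ненулевой вектор сонаправленным себе только при $S\in2\pi\Z$. Этот аргумент я и возьму как основной, так как он не требует разбора случаев. Единственный используемый в нём факт "--- что повороты складываются по углам, а поворот сохраняет направление лишь при углах из $2\pi\Z$ "--- близок к аксиомам, как и утверждение~\ref{p:angles}.
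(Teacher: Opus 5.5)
Ваше доказательство верно: аргумент, который Вы в итоге берёте как основной, в точности совпадает с доказательством в статье, где то же записано через $e^{it_j}$. Этот аргумент таков: композиция поворотов на углы $\angle A_iOA_{i+1}$ переводит $\overrightarrow{OA_1}$ в сонаправленный ему вектор, поэтому сумма углов кратна $2\pi$. Предварительная индукция через утверждение~\ref{p:angles} соответствует приведённому в статье указанию к альтернативному доказательству, и Ваш разбор случая $O\in A_1A_{m-1}$ в ней корректен, если индукция ведётся по утверждению для любых точек $A_i\ne O$ (как фактически и сделано в Вашей базе при $t=\pi$).
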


\begin{proof}
    Для $j=1,\ldots,m$ обозначим $t_j:=\angle A_jOA_{j+1}$, где $A_{m+1}=A_1$.  
    Тогда 
    $$\overrightarrow{OA_1} \upuparrows e^{it_m}\overrightarrow{OA_m} \upuparrows e^{i(t_m+t_{m-1})}\overrightarrow{OA_{m-1}} \upuparrows \ldots \upuparrows e^{i(t_m+t_{m-1}+\cdots+t_1)}\overrightarrow{OA_1}.$$
    Следовательно, $(t_m+t_{m-1}+\cdots+t_1)/2\pi$ является целым числом.
\end{proof}

\emph{Указание к альтернативному доказательству:} 
ввиду утверждения~\ref{p:angles},
$$\angle A_{m-1}OA_m+\angle A_mOA_1 \equiv \angle A_{m-1}OA_1 \mod2\pi.$$
\emph{Еще одно доказательство} утверждения~\ref{p:noncl} дают утверждения~\ref{p:winsim}.a и \ref{p:recur}. 

\begin{remark}[обобщения]
(a) Определим \textit{число оборотов} $w(l,O)$ для замкнутой \emph{непрерывной} кривой $l$. 
Последняя определяется как непрерывное отображение $l:S^1\to\R^2-\{O\}$ окружности в плоскость без точки.
Так как отображение $l$ равномерно непрерывно, то существует целое $m>0$, такое что  для любых разбиения окружности $S^1$ на $m$~равных дуг, и~точек $x,y$ одной дуги, точки $l(x)$ и $l(y)$ не симметричны относительно $O$.
Для $k=1,\ldots,m$ обозначим через $A_k$ конец $k$-й дуги.
Определим 
$$2 \pi \cdot w(l,O) := 
\angle A_1OA_2+\angle A_2OA_3+\ldots+\angle A_{m-1}OA_m+\angle A_mOA_1.$$
Это определение корректно (см., например, \cite[\S3]{Sk20}). 

(b) \textit{Числом оборотов} $w(C, O)$ целочисленного 1-цикла \cite[\S4]{Sk} $C$ вокруг точки $O$, которая не принадлежит ни одному отрезку из $C$, назовем число
$$w(C, O) = \sum_{AB \in C} \angle AOB.$$
Аналогично утверждению~\ref{p:noncl}, число $w(C, O)$ является целым.
Какие ещё утверждения обобщаются на 1-циклы?
\end{remark}
 

\section{Простейшие свойства числа оборотов}\label{s:prop} 
 
\begin{theorem}[Борсук--Улам]\label{borul-pl} 
    Пусть замкнутая ломаная $A_1\ldots A_{2k}$ не проходит через точку $O$ и симметрична относительно $O$ (т.~е. $O$ "--- середина отрезка $A_jA_{k+j}$ для каждого $j=1,\ldots,k$).
    Тогда число оборотов ломаной вокруг точки $O$ нечетно.
\end{theorem}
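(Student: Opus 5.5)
The plan is to use the central symmetry to split the sum of angles into two equal halves, and then to reuse the argument of Proposition~\ref{p:noncl} on one half, which is a ``half-loop''.

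Put $A_{2k+1}:=A_1$ and $t_j:=\angle A_jOA_{j+1}$ for $j=1,\ldots,2k$.

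\textbf{Step 1: the two halves have equal angles.} Symmetry about $O$ means $\overrightarrow{OA_{k+j}}=-\overrightarrow{OA_j}$ for $j=1,\ldots,k$. Rotation by $\pi$ about $O$ preserves oriented angles, since multiplying both vectors by $-1$ does not change the rotation taking one direction to the other. Hence $t_{k+j}=t_j$ for $j=1,\ldots,k-1$. The wrap-around term needs a separate check: $\overrightarrow{OA_{2k}}=-\overrightarrow{OA_k}$ and $\overrightarrow{OA_{1}}=-\overrightarrow{OA_{k+1}}$, so $t_{2k}=\angle A_{2k}OA_1=\angle A_kOA_{k+1}=t_k$ as well. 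Therefore
$$2\pi\, w(A_1\ldots A_{2k})=2S,\qquad S:=t_1+\cdots+t_k .$$

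\textbf{Step 2: the half-sum is an odd multiple of $\pi$.} This is exactly as in the proof of Proposition~\ref{p:noncl}, chaining the definition of the oriented angle:
$$\overrightarrow{OA_{k+1}}\upuparrows e^{it_k}\overrightarrow{OA_k}\upuparrows\ldots\upuparrows e^{i(t_k+\cdots+t_1)}\overrightarrow{OA_1}=e^{iS}\,\overrightarrow{OA_1}.$$
Since $\overrightarrow{OA_{k+1}}=-\overrightarrow{OA_1}\neq 0$, the vectors $-\overrightarrow{OA_1}$ and $e^{iS}\overrightarrow{OA_1}$ are codirected. Hence $e^{iS}$ is a negative real number of modulus $1$, that is, $e^{iS}=-1$. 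So $S=(2r+1)\pi$ for some integer $r$, and $w=2S/2\pi=2r+1$ is odd.

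\textbf{Main obstacle.} The only delicate point is the index bookkeeping in Step~1, in particular the wrap-around term $t_{2k}$. One must also check that invariance of oriented angles under central symmetry holds with the chosen convention $t\in(-\pi,\pi]$. It does, because the defining condition ``$\overrightarrow{OB}$ is codirected with the rotation of $\overrightarrow{OA}$ by $t$'' is unchanged when both vectors are negated. Everything else is a direct repetition of the argument of Proposition~\ref{p:noncl}.
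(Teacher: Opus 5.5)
Your proof is correct and is essentially the paper's argument. Your $S$ equals $2\pi\,w'(A_1\ldots A_{k+1})$, and Step~1 is the paper's symmetry equality $w'(A_1\ldots A_{k+1})=w'(A_{k+1}\ldots A_{2k}A_1)$, which you verify in more detail, including the wrap-around term; Step~2 is an inlined proof of Proposition~\ref{p:w'} applied with $\angle A_1OA_{k+1}=\pi$, which the paper cites directly instead of repeating the chaining argument of Proposition~\ref{p:noncl}.
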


\begin{figure}[ht] \centering
    \includegraphics[scale=0.65]{borsuk_black.eps}
    \caption{Замкнутая ломаная $l$, симметричная относительно точки $O$; $w(l)=3$} 
\end{figure}

Следующие обозначение и результаты будут полезны.

\textbf{Ломаной} называется упорядоченный набор точек (не обязательно различных).\footnote{В математике иногда разные вещи 
имеют одинаковые формализации --- тогда разница проявляется в том, что мы с этими вещами делаем.
Например, многие разные вещи формализуются понятием <<множества>>.
Тогда формально одинаковые объекты называются по-разному, чтобы сделать более понятными операции над ними.
Например, в этом тексте, упорядоченный набор точек называется и ломаной, и замкнутой ломаной. 
Другой пример: чтобы сделать более понятной операцию умножения, <<числовую последовательность>> называют <<производящей функцией этой последовательности>>.
}

Пусть $l=A_1\ldots A_m$ "--- ломаная, не проходящая через точку $O$. 
Определим действительное число $w'(l)=w'(l,O)$ формулой
$$2\pi \cdot w'(l) = 2\pi \cdot w'(l,O):= \angle A_1OA_2+\angle A_2OA_3+\ldots+\angle A_{m-1}OA_m.$$
Очевидно, что

$\bullet$ $w'(A_1\ldots A_mA_1) = w(A_1\ldots A_m)$;

$\bullet$ $w'(A_1\ldots A_m) = w'(A_1\ldots A_j) + w'(A_j\ldots A_m)$ для каждого $j=1,\ldots,m$; 

$\bullet$ если точки $A_2,\ldots, A_{m-1}$ лежат внутри угла $\angle A_1 O A_m$, то  
$2\pi w'(A_1\ldots A_m) = \angle A_1 O A_m$.

\begin{proposition}\label{p:w'}
    Имеем $\angle A_1OA_m = 2\pi w'(A_1\ldots A_m)+2\pi n$ для некоторого целого $n$. 
\end{proposition}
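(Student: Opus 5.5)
Доказательство будет либо сведением к утверждению~\ref{p:noncl} через замыкание ломаной, либо прямым повторением его рассуждения; оба варианта короткие.

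\emph{Первый способ.} По первому очевидному свойству $w'(A_1\ldots A_mA_1)=w(A_1\ldots A_m)$. По второму свойству (аддитивность при $j=m$ для ломаной $A_1\ldots A_mA_1$) имеем $w'(A_1\ldots A_mA_1)=w'(A_1\ldots A_m)+w'(A_mA_1)$, то есть
$$2\pi\, w(A_1\ldots A_m)=2\pi\, w'(A_1\ldots A_m)+\angle A_mOA_1 .$$
По утверждению~\ref{p:noncl} левая часть равна $2\pi k$ для некоторого целого $k$. Остаётся выразить $\angle A_1OA_m$ через $\angle A_mOA_1$. Из определения ориентированного угла $\angle A_1OA_m\equiv-\angle A_mOA_1 \pmod{2\pi}$: поворот на $-t$ переводит направление $\overrightarrow{OA_m}$ в направление $\overrightarrow{OA_1}$. При этом равенство точное, если $\angle A_mOA_1\ne\pi$; если же $\angle A_mOA_1=\pi$, то и $\angle A_1OA_m=\pi$, что также даёт сравнение по модулю $2\pi$. Отсюда
$$\angle A_1OA_m = 2\pi w'(A_1\ldots A_m) + 2\pi n$$
для некоторого целого $n$, что и требуется. Случай $m=1$ тривиален: обе части равны нулю при $n=0$.

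\emph{Второй способ (запасной)} повторяет доказательство утверждения~\ref{p:noncl}. Положим $t_j=\angle A_jOA_{j+1}$ для $j=1,\ldots,m-1$. Тогда
$$\overrightarrow{OA_m}\upuparrows e^{i(t_1+\ldots+t_{m-1})}\overrightarrow{OA_1},$$
а также $\overrightarrow{OA_m}\upuparrows e^{i\angle A_1OA_m}\overrightarrow{OA_1}$. Два поворота, переводящие направление $\overrightarrow{OA_1}$ в одно и то же направление, отличаются на кратное $2\pi$.

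Единственное место, требующее аккуратности, "--- это сравнение $\angle A_1OA_m\equiv-\angle A_mOA_1\pmod{2\pi}$ с учётом граничного значения $\pi$, поскольку ориентированный угол лежит в $(-\pi,\pi]$, а не точно антисимметричен. Но это проверяется непосредственно по определению, как сказано выше.
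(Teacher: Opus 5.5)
Your first argument is the paper's proof. The paper writes $2\pi w(A_1\ldots A_m)=2\pi w'(A_1\ldots A_m)+\angle A_mOA_1$, invokes Proposition~\ref{p:noncl}, and leaves implicit the step $\angle A_1OA_m\equiv-\angle A_mOA_1 \pmod{2\pi}$, which you check explicitly, including the boundary value $\pi$. Your backup argument, which repeats the rotation computation from the proof of Proposition~\ref{p:noncl} directly for the open polyline, is also correct and does not need that antisymmetry step at all.
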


\begin{proof}
    Очевидно, что $2\pi w(A_1\ldots A_m) = 2\pi w'(A_1\ldots A_m)+\angle A_mOA_1$.
    По утверждению~\ref{p:noncl} получаем требуемое.
\end{proof}

\begin{proof} [Доказательство теоремы~\ref{borul-pl}]
    Ввиду симметричности, $w'(A_1 \ldots A_{k+1}) = w'(A_{k+1} \ldots A_{2k}A_1)$. 
    Тогда
    $$w(A_1\ldots A_{2k}) = w'(A_1\ldots A_{2k}A_1) = w'(A_1\ldots A_{k+1}) + w'(A_{k+1}\ldots A_{2k}A_1) =$$
    $$= 2w'(A_1 \ldots A_{k+1}) \stackrel{(1)}{=} 2 \left( \frac{\angle A_1OA_{k+1}}{2\pi} + n \right) = 1 + 2n$$
    для некоторого целого $n$. 
    Здесь равенство $(1)$ следует из утверждения~\ref{p:w'}.
\end{proof}

Обозначим через $l^{-1}$ ломаную, полученную из ломаной $l$ прохождением в противоположном порядке.
Очевидно, что $w'(l) = -w'(l^{-1})$ для любой точки $O\not\in l$. 

\emph{Конкатенацией} ломаных $l_1 = A_1 \ldots A_m C$ и $l_2 = C B_1\ldots B_k$ называется  ломаная 
$$l_1l_2 := A_1 \ldots A_m C B_1 \ldots B_k.$$
\emph{Конкатенацией} замкнутых ломаных $l_1 = A_1 \ldots A_m C$ и $l_2 = B_1\ldots B_k C$ называется замкнутая ломаная 
$$l_1l_2 := A_1 \ldots A_m C B_1 \ldots B_k C.$$
В этих обозначениях 
$$w'(l_1l_2) = w'(l_1) + w'(l_2)$$ 
для любых точки $O$ и ломаных $l_1, l_2$, не проходящих через $O$.
Это очевидное равенство уже было использовано в доказательстве теоремы~\ref{borul-pl}. 

\begin{proposition}\label{k23}
    Пусть $O$, $A$, $B$ "--- три попарно различные точки.
    Пусть $l_1$, $l_2$, $l_3$ "--- ломаные, соединяющие точку $A$ с точкой $B$, и не проходящие через точку $O$.
    Тогда 
    \linebreak
    $w(l_1 l_2^{-1}) + w(l_2 l_3^{-1}) = w(l_1 l_3^{-1})$.
\end{proposition}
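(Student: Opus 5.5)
Доказательство будет прямым вычислением через $w'$ с использованием аддитивности при конкатенации и равенства $w'(l^{-1})=-w'(l)$. Сначала нужно аккуратно понять, что такое $l_1l_2^{-1}$. Ломаная $l_1$ идёт из $A$ в $B$, а $l_2^{-1}$ идёт из $B$ в $A$. Поэтому $l_1l_2^{-1}$ есть конкатенация ломаных: она начинается в $A$ и заканчивается в $A$. Как замкнутую ломаную её следует понимать, отбросив последнюю точку $A$ (она совпадает с первой). В этом и состоит главная (впрочем, чисто формальная) трудность: надо аккуратно согласовать определения конкатенации ломаных и замкнутых ломаных, чтобы выполнялось
$$w(l_1l_2^{-1}) = w'(l_1l_2^{-1}),$$
где справа стоит $w'$ незамкнутой ломаной $l_1l_2^{-1}$ из $A$ в $A$. Это следует из первого очевидного свойства $w'(A_1\ldots A_mA_1)=w(A_1\ldots A_m)$. Условие $O\ne A$ гарантирует, что все углы определены.

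Далее по аддитивности $w'$ относительно конкатенации ломаных и по равенству $w'(l^{-1})=-w'(l)$ получаем
$$w(l_1l_2^{-1}) = w'(l_1)+w'(l_2^{-1}) = w'(l_1)-w'(l_2).$$
Точно так же $w(l_2l_3^{-1}) = w'(l_2)-w'(l_3)$ и $w(l_1l_3^{-1}) = w'(l_1)-w'(l_3)$.

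Складывая первые два равенства, получаем третье: слагаемые $w'(l_2)$ взаимно уничтожаются, что и даёт требуемое тождество. Попарная различность точек $O,A,B$ нужна лишь для корректности всех углов. Содержательных препятствий здесь нет; единственное место, требующее внимания, — проверка того, что склейка в точке $B$ не даёт лишнего слагаемого. Действительно, при склейке в точке $B$ появляется угол $\angle BOB=0$ только в том случае, если повторять точку дважды. По определению конкатенации $l_1=\ldots B$, $l_2^{-1}=B\ldots$ точка $B$ пишется один раз, так что лишних слагаемых нет.
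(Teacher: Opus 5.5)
Your proof is correct and is essentially the paper's argument: the paper also computes $w(l_1l_2^{-1})+w(l_2l_3^{-1}) = w'(l_1)+w'(l_2^{-1})+w'(l_2)+w'(l_3^{-1}) = w'(l_1)+w'(l_3^{-1}) = w(l_1l_3^{-1})$, using additivity of $w'$ under concatenation and $w'(l^{-1})=-w'(l)$. Your care about the junction vertices is not needed, since a repeated vertex only contributes $\angle BOB=\angle AOA=0$.
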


\begin{proof}
    Имеем
    $$w(l_1 l_2^{-1}) + w(l_2 l_3^{-1}) = w'(l_1) + w'(l_2^{-1}) + w'(l_2) + w'(l_3^{-1}) = w'(l_1) + w'(l_3^{-1}) = w(l_1 l_3^{-1}).$$  
\end{proof}

\begin{example}[см. построение в {\cite[\S2.A]{AMS}}] \label{k23-ex}
    Пусть $O$, $A$, $B$ "--- три попарно различных точки.
    Для любых трех целых чисел $n_1, n_2, n_3$, таких что $n_1 + n_2 = n_3$, существуют три ломаные $l_1, l_2, l_3$, соединяющие точку $A$ с точкой $B$, не проходящие через точку $O$, и такие, что
$$w(l_1l_2^{-1}) = n_1, \qquad w(l_2l_3^{-1}) = n_2 \quad\text{и}\quad w(l_1l_3^{-1}) = n_3.$$
\end{example}

\begin{proposition} \label{p:recur}
    Пусть замкнутая ломаная $A_1 \ldots A_m$ не проходит через точку $O$.
Пусть точка $P$ такова, что $O$ не лежит ни на одном из отрезков $PA_1,\ldots,PA_m$.
    Тогда
$$w(A_1 \ldots A_m) = \sum_{i = 1}^{m}w(PA_iA_{i+1}),\quad\text{где}\quad A_{m+1} = A_1.$$
\end{proposition}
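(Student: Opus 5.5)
Мы распишем правую часть по определению и воспользуемся телескопическим сокращением. Заметим, что каждое слагаемое $w(PA_iA_{i+1})$ определено: точка $O$ не лежит на отрезках $PA_i$, $PA_{i+1}$ (по условию) и на $A_iA_{i+1}$ (так как ломаная не проходит через $O$). В частности, $O\ne P$ и $O\ne A_i$, поэтому все углы с вершиной $O$, встречающиеся ниже, определены.

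По определению
$$2\pi\, w(PA_iA_{i+1}) = \angle POA_i + \angle A_iOA_{i+1} + \angle A_{i+1}OP.$$
Просуммируем по $i=1,\ldots,m$ (с $A_{m+1}=A_1$). Слагаемые $\angle A_iOA_{i+1}$ в сумме дают ровно $2\pi\, w(A_1\ldots A_m)$. Остальные слагаемые дают
$$\sum_{i=1}^m\bigl(\angle POA_i + \angle A_{i+1}OP\bigr) = \sum_{i=1}^m\bigl(\angle POA_i + \angle A_iOP\bigr),$$
где мы сдвинули индекс во второй сумме, пользуясь цикличностью. Остается показать, что $\angle POA_i + \angle A_iOP = 0$ для каждого $i$.

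Это единственное место, требующее аккуратности, и оно и есть главное препятствие: ориентированный угол принимает значения в $(-\pi,\pi]$, поэтому $\angle XOY=-\angle YOX$ верно, если только $\angle XOY\ne\pi$. Если угол равен $\pi$, то $\angle POA_i+\angle A_iOP=2\pi$. Но $\angle POA_i=\pi$ означает, что лучи $OP$ и $OA_i$ противоположны, то есть $O$ лежит на отрезке $PA_i$, что запрещено условием. Значит, $\angle POA_i\in(-\pi,\pi)$, и тогда $\angle A_iOP=-\angle POA_i$, поскольку поворот на $-t$ переводит направление $\overrightarrow{OA_i}$ в направление $\overrightarrow{OP}$ и $-t\in(-\pi,\pi)$. Итак, все такие пары сокращаются, и, разделив на $2\pi$, получаем требуемое равенство.

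Альтернативно можно сослаться на уже сформулированные свойства: $w(PA_iA_{i+1})=w'(PA_iA_{i+1}P)$, а $w'(PA_i)=-w'(A_iP)$ (это свойство $w'(l)=-w'(l^{-1})$ для двухточечной ломаной). Но при этом стоит явно отметить, что для $l=PA_i$ это свойство использует именно условие $O\notin PA_i$.
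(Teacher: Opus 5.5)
Your proof is correct and is essentially the paper's argument. The paper gets the identity by repeated use of Proposition~\ref{k23}, which is the same telescoping in which $w'(PA_i)$ and $w'(A_iP)$ cancel; you carry it out directly from the definition. Your explicit check that $\angle POA_i+\angle A_iOP=0$, because $O\notin PA_i$ and so this angle is never $\pi$, is exactly the hypothesis implicit in the paper's identity $w'(l^{-1})=-w'(l)$, which the paper states as obvious.
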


\begin{proof}
    Обозначим $a_i := A_iA_{i+1}$ и $p_i = PA_i$ для каждого $i = 1, \ldots, m$.
    Тогда многократным применением утверждения~\ref{k23} получаем 
    $$w(A_1 \ldots A_m) = w(a_1\ldots a_m) =  \sum_{i = 1}^{m} w(p_{i-1}a_ip_i^{-1}) = \sum_{i = 1}^{m}w(PA_iA_{i+1}),\quad\text{где}\quad p_0 = p_m.$$
\end{proof}

\begin{proposition}\label{p:3rays} 
    Пусть $A_0A_1A_2$ "--- правильный треугольник, а точка $O$ "--- его центр.
    Для $j=0,1,2$ обозначим через $l_j$ ломаную, не пересекающую луч $OA_j$ и соединяющую $A_{j+1}$ с $A_{j+2}$, где нумерация берется по модулю 3.
    Тогда $w(l_0l_1l_2) = \pm 1$.  
\end{proposition}

\begin{proof}
    Можно считать, что вершины треугольника пронумерованы против часовой стрелки.
    По утверждению~\ref{p:w'} имеем $w'(l_0) = \frac{1}{3} + k$ для некоторого целого $k$.
    В следующем абзаце мы докажем, что $k = 0$.
    Равенства  $w'(l_1) = w'(l_2) = \frac{1}{3}$ доказываются аналогично.
    Тогда $w(l_0l_1l_2) = w'(l_0) + w'(l_1) + w'(l_2) = 1$. 

    Обозначим через $B_1 \ldots B_m$ последовательные вершины ломаной $l_0$. 
    Для $j = 1, \ldots, m-1$ положим $t_j := \angle B_jOB_{j+1} \in (-\pi, \pi)$.
Если бы нашлось  $j= 1, \ldots, m-1$, такое что $T_j := t_1 + \ldots + t_j \not \in \left(-\frac{2\pi}{3}, \frac{4\pi}{3}\right)$, то для наименьшего из таких $j$ отрезок $B_jB_{j+1}$ пересекал луч $OA_0$. 
Значит, $T_j \in \left(-\frac{2\pi}{3}, \frac{4\pi}{3}\right)$ для любого $j = 1, \ldots, m-1$.
    В частности, $T_{m-1} \in \left(-\frac{2\pi}{3}, \frac{4\pi}{3}\right)$.
    При этом $T_{m-1} = 2\pi w'(l_0) = \frac{2\pi}{3} + 2\pi k$ для некоторого целого $k$.
    Следовательно, $k = 0$.
\end{proof}

Другие доказательства приведены в \S\ref{s:wint} и в \cite[конец \S3]{AS}.

\section{Число оборотов и 
пересечения}\label{s:wint} 

\begin{proposition} \label{p:rel} 
Возьмем точки $P_0$ и $P_1$, соединенные ломаной, не пересекающейся с замкнутой ломаной $l$. 
Тогда  $w(l,P_0)=w(l,P_1)$. 
\end{proposition}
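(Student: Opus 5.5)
Сначала сведем утверждение к случаю, когда точки $P_0$ и $P_1$ соединены одним отрезком $P_0P_1$, не пересекающим $l$. Общий случай получается отсюда по транзитивности равенства: если $P_0=Q_0Q_1\ldots Q_s=P_1$ "--- вершины соединяющей ломаной, то каждый отрезок $Q_{i-1}Q_i$ не пересекает $l$, и достаточно проверить $w(l,Q_{i-1})=w(l,Q_i)$.

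Для одного отрезка $P_0P_1$ воспользуемся утверждением~\ref{p:recur}. Возьмем вспомогательную точку $P$ вне прямой $P_0P_1$ и в общем положении (так, чтобы прямые $PA_i$ не проходили через точки отрезка $P_0P_1$). Тогда ни одна точка отрезка $P_0P_1$ не лежит ни на одном отрезке $PA_i$, и для каждой точки $O\in P_0P_1$ выполнено
$$w(l,O)=\sum_{i=1}^m w(PA_iA_{i+1},O).$$
Так что достаточно доказать равенство $w(PA_iA_{i+1},P_0)=w(PA_iA_{i+1},P_1)$ для каждого треугольника. По утверждению~\ref{p:angles} число $w(PA_iA_{i+1},O)$ равно $\pm1$ (знак определяется ориентацией тройки $P,A_i,A_{i+1}$ и от $O$ не зависит), если $O$ лежит в выпуклой оболочке этих точек, и $0$ иначе. Отрезок $P_0P_1$ не пересекает сторону $A_iA_{i+1}$ по условию, а стороны $PA_i$, $PA_{i+1}$ "--- по выбору $P$. Поэтому он не может выйти из треугольника: пересечение отрезка с выпуклым множеством "--- отрезок, и его конец, отличный от концов $P_0P_1$, лежал бы на границе. Значит, $P_0$ и $P_1$ одновременно лежат внутри или одновременно снаружи. Случай вырожденного треугольника тоже покрывается: тогда выпуклая оболочка совпадает с объединением сторон, и $O$ в ней не лежит.

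Основное препятствие "--- аккуратный выбор $P$: нужно, чтобы $P_0P_1$ не пересекал ни один из отрезков $PA_i$. Для этого достаточно взять $P$ вне конечного объединения прямых: прямой $P_0P_1$ и прямых, проходящих через $A_i$ и точку отрезка $P_0P_1$, "--- а точнее вне объединения <<конусов>> $\{A_i+t(X-A_i): X\in P_0P_1,\ t\ge1\}$. Каждый такой конус является углом, поэтому их объединение не покрывает плоскость, и подходящая точка $P$ существует. Если $A_i$ лежит на прямой $P_0P_1$, конус вырождается в луч, и рассуждение не меняется.

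Альтернативно можно рассуждать через непрерывность: функция $O\mapsto w(l,O)$ на отрезке $P_0P_1$ целочисленна по утверждению~\ref{p:noncl} и непрерывна, поскольку каждый ориентированный угол $\angle A_iOA_{i+1}$ непрерывен по $O$ вне прямой, содержащей $A_iA_{i+1}$, а на этой прямой вне отрезка равен $0$ и локально постоянен. Но этот путь требует анализа, поэтому основным планом остается комбинаторный.
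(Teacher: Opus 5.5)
Your argument is correct in outline, and it takes a genuinely different route from the paper. The paper only hints at a proof: by continuity, or via Lemma~\ref{l:square}(a). In the second case one views $l$ as the polyline $A_1\ldots A_mA_1$ and takes $p$ to be the connecting polyline. Then $\partial(l\times p)=w(l,P_1)-w(l,P_0)$, because the two terms $w'(p,A_1)$ cancel. Bi-additivity of $\partial$ then reduces its vanishing to the identity $\angle ADB+\angle DBC+\angle BCA+\angle CAD=0$ for two disjoint segments $AB$, $CD$.

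You instead reduce to one segment, cone $l$ off from an auxiliary point by Proposition~\ref{p:recur}, and settle each triangle by Proposition~\ref{p:angles} and convexity. Your route rests only on facts already available at this point (Proposition~\ref{p:angles} is used in the paper without proof). The price is the segment-by-segment reduction and a point $P$ that has to be adapted to each segment. The paper's route handles the whole connecting polyline at once with no auxiliary choices, and the same bookkeeping yields Lemma~\ref{l:square}(b), i.e.\ the signed formula of Proposition~\ref{p:sign-stokes}. Its base case, however, is a separate geometric fact that the paper only sketches.

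Two steps need one more sentence each.

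\emph{The sign inside the triangle.} Proposition~\ref{p:angles} as stated gives only $\pm2\pi$. So it does not imply $w(PA_iA_{i+1},P_0)=w(PA_iA_{i+1},P_1)$ when both points are inside the triangle. This independence of the sign from $O$ is exactly the triangle case of what you are proving, so it must be justified rather than cited. For instance: if $O$ is strictly inside a counterclockwise triangle $ABC$, then $O$ lies on the same side of each side line as the opposite vertex. Hence the triangles $OAB$, $OBC$, $OCA$ are counterclockwise, and each of $\angle AOB$, $\angle BOC$, $\angle COA$ lies in $(0,\pi)$. The sum lies in $(0,3\pi)$ and therefore equals $2\pi$ (and $-2\pi$ for a clockwise triangle).

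\emph{The choice of $P$.} The inference \emph{each bad set is an angle, hence their union does not cover the plane} is invalid: three angles of $120^\circ$ with a common vertex already cover the plane. (Also, the lines through $A_i$ and points of $P_0P_1$ are not finitely many, though your passage to the truncated cones repairs that.) A correct argument runs as follows.
\begin{itemize}
\item If $A_i$ is at signed distance $h_i\ne0$ from the line $P_0P_1$, then $A_i+t(X-A_i)$ is at signed distance $(1-t)h_i$. So on a line $L$ parallel to $P_0P_1$ and different from it, the parameter $t$ is fixed, and the bad set meets $L$ in a segment or not at all.
\item If $A_i$ lies on the line $P_0P_1$, the bad set lies on that line.
\end{itemize}
So every point of $L$ outside finitely many bounded segments is an admissible choice of $P$.
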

    
\emph{Указание:} используйте \emph{соображения непрерывности} или выведите из леммы~\ref{l:square}.a.

\medskip
Несколько точек находятся в \emph{общем положении}, если никакие три из них не лежат на прямой и никакие три отрезка, их соединяющие, не имеют общей внутренней точки.
\emph{Далее в этом разделе вершины всех рассматриваемых ломаных $l$, а также всех пар ломаных $l$ и $p$, попарно различны и (если не оговорено противное) находятся в общем положении.}

\begin{proposition}\label{p:stokes} 
    Возьмем замкнутую и незамкнутую ломаные $l$ и $p$.
    Обозначим через $P_0$ и $P_1$ начальную и конечную точки ломаной $p$.
    Тогда
    
(a) $w(l,P_1)-w(l,P_0) \equiv |l\cap p| \mod2$.

(b) $w(l, P_1) \equiv |l\cap p| \mod2$, если точка $P_0$ расположена достаточно далеко от ломаной $l$ (например, вне выпуклой оболочки ломаной $l$).
\end{proposition}

\begin{proof}[Набросок доказательства]
    Пункт (b) следует из пункта (a), так как $w(l, P_0) = 0$.

(a) Если $p$ не пересекает $l$, то по утверждению~\ref{p:rel} имеем $w(l,P_1)-w(l, P_0) = 0 = |l \cap p|$.
     
Иначе можно считать, что $p$ есть отрезок, пересекающий $l = A_1 A_2 \ldots A_m$ в единственной точке, и эта точка принадлежит отрезку $A_1A_2$.
    Выберем точку $X$ так, что $P_0$ находится внутри треугольника $A_1XA_2$.  
    Тогда $P_1$ лежит вне треугольника $A_1XA_2$.
    Определим замкнутую ломаную $l'$ формулой $l' := A_1 X A_2 \ldots A_m$.
    Имеем 
    $$w(l, P_1) = w(l, P_1) + w(A_1XA_2, P_1) = w(l', P_1) \stackrel{(1)}{=} 
w(l', P_0) = w(l, P_0) + w(A_1XA_2, P_0) = w(l, P_0) \pm1.$$
    Здесь равенство (1) выполнено по утверждению~\ref{p:rel}.

\emph{Другой способ:} можно применить лемму~\ref{l:square}.b (или даже ее версию по модулю 2). 
\end{proof}

\begin{figure}[ht]\centering
    \includegraphics[scale=.8]{c1.eps} 
    \qquad 
    \includegraphics[scale=.8]{c2.eps}
    \qquad
    \includegraphics[scale=.8]{c3.eps}
    \qquad 
    \includegraphics[scale=.8]{c4.eps}
    \qquad 
    \includegraphics[scale=.8]{c5.eps}
    \caption{Шахматные раскраски дополнений и внутренности по модулю 2}
    \label{f:tohu}
\end{figure}

Любое (открытое) подмножество плоскости разбивается на \emph{компоненты (связности)}, такие что любые две точки одной компоненты можно соединить ломаной, лежащей в подмножестве, а никакие две точки из разных компонент --- нельзя.
Дополнение $\R^2-l$ до замкнутой ломаной $l$ допускает <<шахматную раскраску>>, такую что компоненты дополнения, соседствующие по некоторому отрезку ломаной, покрашены в разные цвета, см. рисунок~\ref{f:tohu} \cite[утверждение 2.1.1]{Sk18}.  
\textit{Внутренностью по модулю 2} замкнутой ломаной $l$ называется объединение черных областей шахматной раскраски (при условии, что <<бесконечная>> область белая).
Вот эквивалентное прямое определение, не использующее шахматной раскраски. 
\textit{Внутренностью по модулю 2} называется множество всех точек $X\in\R^2-l$, для которых найдется ломаная $p$, 

$\bullet$ соединяющая точку $X$ с точкой, расположенной достаточно далеко от $l$, 

$\bullet$ пересекающая $l$ в нечетном числе точек, и 

$\bullet$ такая, что все вершины ломаных $l$ и $p$ попарно различны и (кроме $X$) находятся в общем положении. 

Корректность этого определения следует из леммы о четности \cite[лемма 1.3.2.b]{Sk18}: \emph{если вершины двух замкнутых ломаных попарно различны и находятся в общем положении, то ломаные пересекаются в четном числе точек}. 
(Эта лемма также есть частный случай утверждения~\ref{p:stokes}.a для $P_0 = P_1$.)

\begin{proposition}[следует из утверждения~\ref{p:stokes}.b]\label{p:radon}
Точка $O$ лежит во внутренности по модулю 2 замкнутой ломаной $l$ тогда и только тогда, когда $w(l)$ нечетно.
\end{proposition}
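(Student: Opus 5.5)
Утверждение получается прямым сопоставлением определения внутренности по модулю 2 (в его <<прямой>> форме) с утверждением~\ref{p:stokes}.b. План такой: для данной точки $O\notin l$ предъявим ломаную $p$ из $O$ в точку $P_0$ вне выпуклой оболочки $l$, удовлетворяющую условиям общего положения. По утверждению~\ref{p:stokes}.b, применённому к ломаной $p^{-1}$ (начало $P_0$ далеко, конец $P_1=O$), получим
$$w(l,O)\equiv |l\cap p| \mod 2.$$

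Направление <<$\Rightarrow$>>: если $O$ лежит во внутренности по модулю 2, то по определению существует ломаная $p$ из $O$ в далёкую точку с нечётным $|l\cap p|$, вершины которой вместе с вершинами $l$ (кроме $O$) находятся в общем положении. Подставляя её в сравнение, получаем, что $w(l)$ нечётно.

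Направление <<$\Leftarrow$>>: пусть $w(l)$ нечётно. Возьмём отрезок $p=OP_0$, где $P_0$ вне выпуклой оболочки $l$. Точку $P_0$ выберем в общем положении, избегая конечного числа прямых: прямые через пары вершин $l$, прямые через $O$ и вершины $l$, а также прямые, проходящие через точки пересечения отрезков. Такой выбор возможен, так как запрещённое множество есть конечное объединение прямых. Тогда $|l\cap p|\equiv w(l)\equiv 1$, и $p$ свидетельствует, что $O$ лежит во внутренности по модулю 2.

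Основная тонкость в том, что в утверждении~\ref{p:stokes} требуется общее положение всех вершин, включая $O$, а в определении внутренности для $X=O$ оно не требуется. Поэтому, прежде чем применять утверждение~\ref{p:stokes}.b, я бы немного сдвинул $O$ внутри его компоненты дополнения $\R^2-l$ в точку $O'$ общего положения, соединив $O$ и $O'$ коротким отрезком, не пересекающим $l$. По утверждению~\ref{p:rel} имеем $w(l,O)=w(l,O')$. Число пересечений ломаной $p$ с $l$ при этом сохраняется, если добавить к $p$ этот короткий отрезок, не пересекающий $l$. Корректность самого определения обеспечивается леммой о чётности, которую можно использовать.
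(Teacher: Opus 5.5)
Your argument is the same as the paper's. The paper obtains the statement directly from Proposition~\ref{p:stokes}.b, i.e.\ from $w(l,O)\equiv|l\cap p|\pmod 2$ for a polyline $p$ joining $O$ to a far point, and reads both implications off the definition of the interior mod~2.

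What does not work is the patch in your last paragraph. Prepending the short segment $O'O$ to $p$ achieves nothing: $O$ is still a vertex of the new polyline, so the general-position hypothesis of Proposition~\ref{p:stokes} fails exactly as before. You have to \emph{replace} the vertex $O$ of $p$ by $O'$, taken in general position with $OO'\cap l=\emptyset$, so that $w(l,O')=w(l,O)$ by Proposition~\ref{p:rel}. You then have to check that $|l\cap p|$ does not change. For $O'$ close to $O$ this holds provided every point of $l$ on the first edge $OP_2$ of $p$ is an interior point of exactly one edge of $l$ and lies on no other edge of $p$. Such a point is a transversal crossing, which survives a small move of $O$. In the direction $\Leftarrow$, where $p=OP_0$, your choice of $P_0$ (off the lines through $O$ and the vertices or self-crossings of $l$) guarantees this, so that direction goes through after the correction.

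In the direction $\Rightarrow$ the definition, read literally, does not guarantee this, since $X=O$ is exempt from general position; and under that reading $\Rightarrow$ is actually false. Let $l=ABC$ be a triangle, $O$ a point outside it on a line meeting the triangle only at $A$, and $P$ a far point of the ray $OA$ beyond $A$, off the line $BC$. Then $p=OP$ satisfies all conditions of the definition with $l\cap p=\{A\}$, so $|l\cap p|=1$, while $w(l,O)=0$. So for $\Rightarrow$ you must use the evidently intended reading that $p$ meets $l$ only in such clean crossings. The paper also passes over this point. With that reading, the replacement argument completes the proof.
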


\begin{figure}[h]\centering
\includegraphics[scale=.8]{aa1.eps}\quad \includegraphics[scale=.8]{aa2.eps}
\caption{Знак точки пересечения}
\label{f:sign}
\end{figure}

\begin{figure}[h]
\centerline{\includegraphics[width=4.5cm]{intersection_sign_graphs.eps} \qquad \includegraphics{curves.eps}}
\caption{Две ломаные на плоскости, пересекающиеся в четном числе точек, сумма знаков которых равна нулю (слева) и не равна нулю (справа)} 
\label{f:gl2}
\end{figure}

Пусть никакие три из точек $A,B,C,D$ не лежат на одной прямой.
{\bf Знаком} точки пересечения ориентированных отрезков $\overrightarrow{AB}$ и $\overrightarrow{CD}$ назовем $+1$, если обход $ABC$ происходит по  часовой стрелке, и $-1$ в противном случае (рисунок~\ref{f:sign}).
Для ломаных $l$ и $p$ обозначим через $l \cdot p$ сумму знаков точек их пересечений (рисунок~\ref{f:gl2}).

\begin{proposition}\label{p:sign-stokes} 
    Возьмем замкнутую и незамкнутую ломаные $l$ и $p$.
    Обозначим через $P_0$ и $P_1$ начальную и конечную точки ломаной $p$.
    Тогда
    
(a) $w(l,P_1)-w(l,P_0) = l \cdot p$.  

(b) $w(l,P_1) = l \cdot p$, если точка $P_0$ расположена достаточно далеко от ломаной $l$. 
\end{proposition}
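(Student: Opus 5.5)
Доказательство будет повторять схему доказательства утверждения~\ref{p:stokes}, только вместо сравнения по модулю 2 мы проследим знак. Пункт (b) следует из пункта (a): если $P_0$ лежит вне выпуклой оболочки $l$, то $w(l,P_0)=0$ (например, по утверждению~\ref{p:recur} с $P=P_0$ и утверждению~\ref{p:angles}, так как $P_0$ не лежит ни в одном треугольнике $P_0A_iA_{i+1}$ в смысле выпуклой оболочки; либо по рассуждению с опорными прямыми, как в утверждении~\ref{p:winsim}.a). Далее доказываем (a).

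Сначала сведём (a) к случаю, когда $p$ "--- отрезок. Пусть $p=P_0Q_1\ldots Q_{k-1}P_1$. Обе части равенства аддитивны относительно разбиения $p$ на звенья. Левая часть является телескопической суммой $\sum (w(l,Q_{i+1})-w(l,Q_i))$, а $l\cdot p$ есть сумма $l\cdot Q_iQ_{i+1}$. Общее положение гарантирует, что вершины $Q_i$ не лежат на $l$. Затем разобьём отрезок $P_0P_1$ на меньшие отрезки промежуточными точками, не лежащими на $l$, так, чтобы каждый кусок пересекал $l$ не более чем в одной точке. Если кусок не пересекает $l$, то обе части равны нулю по утверждению~\ref{p:rel}.

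Остаётся случай, когда отрезок $P_0P_1$ пересекает $l=A_1A_2\ldots A_m$ ровно в одной точке, лежащей на $A_1A_2$. Как в доказательстве утверждения~\ref{p:stokes}, выберем точку $X$ так, чтобы $P_0$ лежала внутри треугольника $A_1XA_2$, а $P_1$ "--- вне его, и отрезок $P_0P_1$ не пересекал $A_1X$ и $XA_2$ (точку $X$ берём на стороне $P_0$ от прямой $A_1A_2$, близко к ней). Положим $l'=A_1XA_2\ldots A_m$. По утверждению~\ref{k23} (или прямо по определению через $w'$) имеем $w(l,Q)+w(A_1XA_2,Q)=w(l',Q)$, потому что $w'(A_1XA_2)-w'(A_1A_2)=w(A_1XA_2)$. По утверждению~\ref{p:rel} $w(l',P_0)=w(l',P_1)$, а $w(A_1XA_2,P_1)=0$. Отсюда $w(l,P_1)-w(l,P_0)=w(A_1XA_2,P_0)=\pm1$ по утверждению~\ref{p:winsim}.a.

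Основное препятствие "--- проверить, что этот знак совпадает со знаком точки пересечения $\overrightarrow{A_1A_2}$ и $\overrightarrow{P_0P_1}$. Число $w(A_1XA_2,P_0)$ равно $+1$ ровно тогда, когда обход $A_1XA_2$ идёт против часовой стрелки. Это равносильно тому, что обход $A_1A_2X$ идёт по часовой стрелке, то есть $X$, а значит и $P_0$, лежит справа от $\overrightarrow{A_1A_2}$. В этом случае $P_0$ находится справа от $\overrightarrow{A_1A_2}$, $P_1$ "--- слева, и, согласно определению знака (обход $A_1A_2P_0$ по часовой стрелке), знак пересечения равен $+1$. Случай $-1$ симметричен. Эту сверку ориентаций я бы аккуратно провёл на одной конкретной конфигурации (например, $A_1=(0,0)$, $A_2=(1,0)$, $P_0$ ниже оси), а затем сослался бы на то, что обе величины одинаково меняют знак при отражении.
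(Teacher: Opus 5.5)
Your proof is correct and follows the route the paper indicates: repeat the proof of Proposition~\ref{p:stokes} while keeping track of signs. Your orientation check ($A_1XA_2$ counterclockwise $\iff$ $A_1A_2P_0$ clockwise $\iff$ the crossing of $\overrightarrow{A_1A_2}$ and $\overrightarrow{P_0P_1}$ has sign $+1$) is right. Two small slips should be fixed. First, Proposition~\ref{p:recur} cannot be applied with $P=P_0$, since then $O=P_0$ lies on every segment $PA_i$; take $P=A_1$ instead, or rely on your supporting-line argument. Second, $X$ should not be chosen close to the line $A_1A_2$, since that would generally leave $P_0$ outside $A_1XA_2$; take $X$ on the extension of $MP_0$ beyond $P_0$, where $M$ is the midpoint of $A_1A_2$, and then by convexity $P_0P_1$ leaves the triangle only through $A_1A_2$.
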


Утверждение~\ref{p:sign-stokes} доказывается аналогично утверждению~\ref{p:stokes} или выводится из леммы~\ref{l:square}.b. 

\begin{figure}[ht]\centering
    \includegraphics[scale=.8]{numbered-curve2.eps}
    \qquad
    \includegraphics[scale=.8]{numbered-curve3.eps}
    \qquad
    \includegraphics[scale=.8]{numbered-curve4.eps}
    \qquad
    \includegraphics[scale=.8]{numbered-curve5.eps}
    \qquad
    \includegraphics[scale=.8]{numbered-curve1.eps}
    \caption{<<Раскраски>> дополнений в соответствии с числом оборотов (с точностью до смены направления ломаной)}
    \label{f:tohu_colored}
\end{figure}

Возьмем замкнутую ломаную $l$ (рисунок~\ref{f:tohu_colored}).
По утверждению~\ref{p:rel}, числа оборотов вокруг 
точек из одной компоненты дополнения $\R^2-l$ одинаковы. 
По утверждению~\ref{p:sign-stokes}.a, числа оборотов в компонентах, соседствующих по некоторому отрезку, отличаются на $\pm 1$.
Такая <<раскраска>> дополнения $\R^2-l$ целыми числами называется \emph{раскраской Мебиуса-Александера}. 

Для ломаных $A\ldots B$ и $C\ldots D$ обозначим 
$$\partial(A\ldots B \times C\ldots D) := w'(A\ldots B,D)-w'(C\ldots D,B)-w'(A\ldots B,C)+w'(C\ldots D,A).$$ 
Это число определено, когда ни один из концов каждой ломаной не лежит на объединении звеньев другой ломаной. 
(Представим ломаные в качестве \emph{кусочно-линейных отображений} $l,p:[0,1]\to\R^2$.
Тогда $\partial(l\times p)$ есть число оборотов вектора $l(x)-p(y)$ при обходе точкой $(x,y)$ границы $\partial([0,1]^2)$ квадрата $[0,1]^2$ против часовой стрелки.)

Для ломаных $l,p$ на рисунке~\ref{f:gl2} слева и справа $\partial(l\times p)$ равно $0$ и $\pm2$, соответственно (проверьте!). 
 

\begin{lemma}\label{l:square} 
(a) Для любых непересекающихся ломаных $l$ и $p$ 
выполнено $\partial(l\times p)=0$. 

(b) Для любых ломаных $l$ и $p$ выполнено $\partial(l\times p) = l\cdot p$.  
\end{lemma}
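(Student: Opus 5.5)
The plan is to reduce both parts to the case of two segments, using additivity of $\partial(l\times p)$ with respect to subdividing either polyline. Let $l=l_1l_2$ be the concatenation at a vertex $M$, with $l$ running from $A$ to $B$. Since $w'$ is additive under concatenation, $w'(l,D)=w'(l_1,D)+w'(l_2,D)$ and $w'(l,C)=w'(l_1,C)+w'(l_2,C)$. The remaining terms telescope:
$$-w'(p,B)+w'(p,A)=\bigl(-w'(p,M)+w'(p,A)\bigr)+\bigl(-w'(p,B)+w'(p,M)\bigr).$$
Hence $\partial(l\times p)=\partial(l_1\times p)+\partial(l_2\times p)$, and symmetrically in $p$. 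Both sides of (a) and (b) are therefore sums over pairs of links (segments) $AB$ of $l$ and $CD$ of $p$. The intersection count $l\cdot p$ is additive in the same way, and in (a) all pairs of links are disjoint. It therefore suffices to prove:
\begin{itemize}
\item $\partial(AB\times CD)=0$ for disjoint segments;
\item $\partial(AB\times CD)=\pm1$, with the sign of their intersection point, for crossing segments in general position.
\end{itemize}
Note that $w'(AB,D)=\angle ADB/2\pi$, so
$$2\pi\,\partial(AB\times CD)=\angle ADB-\angle CBD-\angle ACB+\angle CAD .$$

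Next I show that this number is an integer. Reversing the angles with minus signs gives
$$2\pi\,\partial=\angle ADB+\angle DBC+\angle BCA+\angle CAD .$$
Each term is the angle under which one side of the closed quadrilateral $ADBC$ is seen from the opposite vertex. I would group these into sums to which Proposition~\ref{p:angles} (and Proposition~\ref{p:w'}) applies, showing the total is $\equiv 0 \pmod{2\pi}$. The cleaner alternative is the remark in the paper: $\partial$ is the winding number of the vector $l(x)-p(y)$ around the boundary of the square, hence an integer by Proposition~\ref{p:noncl} applied to a fine subdivision of the boundary.

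For (a), I use continuity. If $AB$ and $CD$ are disjoint, none of the points $A,B,C,D$ lies on the other segment. Hence each angle stays in $(-\pi,\pi)$ and depends continuously on the configuration. Two disjoint segments are separated by a line, so $CD$ can be translated away from $AB$ perpendicularly to that line while remaining disjoint. During this motion the integer $\partial$ is constant. In the limit all four angles tend to $0$, so $\partial=0$.

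For (b), with $AB$ and $CD$ crossing in general position, the quadrilateral $ACBD$ is convex, with diagonals $AB$ and $CD$. In this case all four angles above have the same sign and lie in $(0,\pi)$ in absolute value. Their sum is therefore a nonzero multiple of $2\pi$ of absolute value less than $4\pi$, i.e. $\pm2\pi$, so $\partial=\pm1$. The main obstacle I expect is matching this sign with the convention "$+1$ iff $ABC$ is clockwise". I would settle it by computing one explicit example, such as $A=(-1,0)$, $B=(1,0)$, $C=(0,1)$, $D=(0,-1)$, and then arguing that $\partial$ stays constant while the crossing configuration moves continuously. This holds because the crossing configurations with a given orientation of $ABC$ form a connected set.
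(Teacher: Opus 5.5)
Your proposal is correct and follows the paper's proof: bi-additivity of $\partial$ in each factor reduces both parts to a pair of links, where $2\pi\,\partial(AB\times CD)=\angle ADB+\angle DBC+\angle BCA+\angle CAD$. The paper only asserts that this sum is $0$ for disjoint segments (noting it suffices that it lies in $(-2\pi,2\pi)$) and says (b) is analogous, so your steps simply carry out that same base case in full: the integrality check, the translation-to-infinity argument for (a), and the convex-quadrilateral/same-sign argument for (b). The sign in (b) can even be read off directly without a connectedness argument: $\angle ADB>0$ iff $ABD$ is counterclockwise, which holds iff $ABC$ is clockwise.
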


\begin{proof}[Набросок доказательства]
(a) Если $l=AB$ и $p=CD$, то лемма верна, поскольку 
\linebreak
$\angle ADB + \angle DBC + \angle BCA + \angle CAD = 0$ для любых непересекающихся отрезков $AB$ и $CD$.
(Достаточно, чтобы эта сумма углов лежала в $(-2\pi,2\pi)$.)

Сведем общий случай к рассмотренному.     
Для произвольных ломаных $l_1, l_2, p$ имеем 
    $$\partial(l_1l_2 \times p) = \partial(l_1 \times p) + \partial(l_2 \times p)\quad\text{и} \quad\partial(p \times l_1l_2) = \partial(p \times l_1) + \partial(p \times l_2).$$
    Поэтому 
    $$\partial(l \times p) = \sum_{\begin{array}{l}
         EF  \text{ --- отрезок ломаной } l  \\
         GH  \text{ --- отрезок ломаной } p    
         \end{array}} \partial(EF\times GH)=0.$$

 (b) Аналогично п. (a). 
\end{proof}

\begin{proof}[Доказательство утверждения~\ref{p:3rays} при помощи утверждения~\ref{p:rel}] 
(Получено редактированием текста А. Абзалилова.)
Определим ломаную $a_j := A_{j+1}A_{j+2}$ и замкнутую ломаную $m_j := l_ja_j^{-1}$.
     Тогда 
     $$w(l_0l_1l_2) \stackrel{(1)}{=} w(m_0) + w(m_1) + w(m_2) + w(a_0a_1a_2) \stackrel{(2)}{=} w(A_0A_1A_2) = \pm1\quad\text{где}$$

$\bullet$ равенство (1) есть многократное применение утверждения~\ref{k23}; 

$\bullet$ равенство (2) следует из равенств $w(m_0) = w(m_1) = w(m_2) = 0$, доказанных в следующем абзаце.

Возьмем любое 
$j = 0, 1, 2$.
     На луче $OA_j$ найдется точка $P_j$ вне выпуклой оболочки точек ломаной $m_j$.
     Тогда $w(m_j, P_j) = 0$.
     Отрезок $OP_j$ не пересекается с ломаной $m_j$.
     Следовательно, $w(m_j) = 0$ по утверждению~\ref{p:rel}.
\end{proof}

 
\end{document}